\DeclareMathAlphabet{\mathpzc}{OT1}{pzc}{m}{it}
\newcommand{\zehn}{\hspace{10pt}}
\newcommand{\fuenf}{\hspace{5pt}}
\newcommand{\fuenfm}{\hspace{-5pt}}
\numberwithin{figure}{section}
\numberwithin{table}{section}
\newtheorem{theorem}{Theorem}[section]
\theoremstyle{definition}
\newtheorem{definition}[theorem]{Definition}
\newtheorem{question}[theorem]{Question}
\theoremstyle{remark}
\newtheorem{remark}[theorem]{Remark}
\numberwithin{equation}{section}
\newfont{\tap}{tap scaled 650}
\def \[{[ }
\def \]{] }
\definecolor{dgreen}{rgb}{0,0.5,0}        
\definecolor{dred}{rgb}{0.5,0,0}
\begin{document}

\title{Friezes for a pair of pants}

\author{\.{I}lke \c{C}anak\c{c}i, Anna Felikson, Ana Garcia Elsener and Pavel~Tumarkin}

\address{\.{I}lke \c{C}anak\c{c}i, Department of Mathematics, VU Amsterdam, De Boelelaan 1105, 1081 HV Amsterdam, The Netherlands}
\email{i.canakci@vu.nl}

\address{Anna Felikson and Pavel Tumarkin, Department of Mathematical Sciences, Durham University, Mathematical Sciences \& Computer Science Building,
Upper Mountjoy Campus,
Stockton Road, Durham, DH1 3LE, UK}
\email{anna.felikson@durham.ac.uk, pavel.tumarkin@durham.ac.uk}

\address{Ana Garcia Elsener, School of Mathematics and Statistics, University of Glasgow, University Place,
Glasgow G12 8SQ, UK}
\email{anaclara.garciaelsener@glasgow.ac.uk}



\begin{abstract}
Frieze patterns are numerical arrangements that satisfy a local arithmetic rule. These arrangements are actively studied in connection to the theory of cluster algebras. In the setting of  cluster algebras, the notion of a frieze pattern can be generalized, in particular to a frieze  associated with a bordered marked surface endowed with a decorated hyperbolic metric. We study friezes associated with a pair of pants, interpreting entries of the frieze as $\lambda$-lengths of arcs connecting the marked points. We prove that all positive integral friezes over such surfaces are unitary, i.e. they arise from triangulations with all edges having unit $\lambda$-lengths.
\end{abstract}

\thanks{AGE was partially supported by EPSRC grant 'The Homological Minimal Model Program' EP/R009325/1, PT was partially supported by the Leverhulme Trust research grant RPG-2019-153}

\maketitle

\section{Friezes from marked surfaces}\label{section-friezes-on-surfaces}

Frieze patterns were introduced by Coxeter in~\cite{Co}, and studied by Conway\,--\:Coxeter in~\cite{CoCo,CoCo2}. In these works the authors connect the notion of frieze patterns with formulas for the {\it pentagramma mirificum}, continued fractions and triangulations of polygons. Coxeter's invention reinvigorated in the early 2000's with the appearance of triangulations of polygons in relation to the cluster algebras introduced by Fomin\,--\:Zelevinsky~\cite{FZ}. 


\begin{definition}
A \emph{frieze pattern (or simply frieze) of type $A_{n}$} (or Conway\,--\:Coxeter frieze) is a grid of integers consisting of $n+2$ infinite rows where successive rows are displayed with a shift and where the first and the last rows consist entirely of 1's and the remaining entries are positive integers.  In addition, the entries in the grid satisfy the \emph{diamond rule} which asserts that every diamond formed by the neighboring entries of the form $\begin{smallmatrix}&a\\b&&c\\&d\end{smallmatrix}$ satisfies the identity $bc-ad=1$. The first nontrivial row is called the \emph{quiddity row} of the frieze.
\end{definition}

 \begin{figure}[h]
\centering
\scalebox{0.8}[0.8]{\def\svgwidth{5.5in}
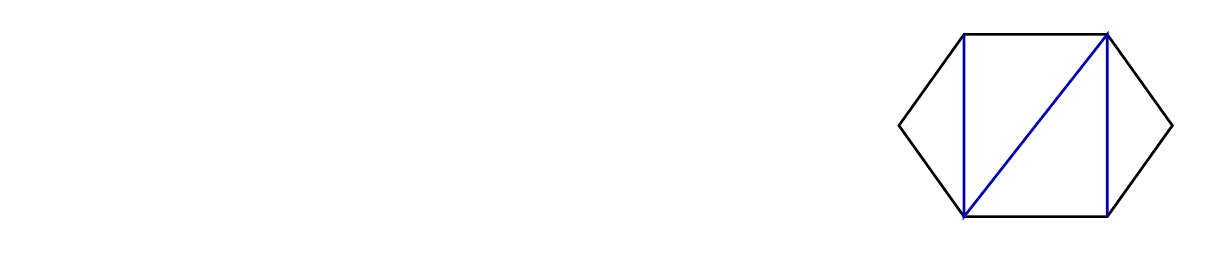}
\caption{Conway\,--\:Coxeter frieze and  triangulation on the hexagon that defines the frieze of type $A_3$.}
\label{f-polygon}
\end{figure}

It was shown by Coxeter that type~$A_n$ frieze patterns are periodic with period dividing $n+3$ and they have a glide symmetry; hence we may consider their fundamental domain and simply refer to them as friezes. Furthermore, it was proven by Conway and Coxeter that type $A_n$ friezes are in bijection with triangulations of convex $(n+3)$-gons. Given such a triangulation (see Fig.~\ref{f-polygon} for an example), consider the sequence of vertices counterclockwise and count the number of triangles incident to each vertex. This gives the \emph{quiddity sequence}; that is, the quiddity row modulo the period of the frieze which determines the type~$A_n$ frieze by applying the diamond rule recursively.

\smallskip

Inspired by Conway and Coxeter's approach using triangulations of the polygon, one can consider friezes on marked surfaces with a hyperbolic metric, following~\cite{FST,Pe} in relation to cluster algebras associated with such surfaces.

Let $S$ denote an oriented Riemann surface, with a non-empty boundary that we denote $\partial S$, and let $M  \subset \partial S$ be a finite subset. Furthermore, we require that each boundary component has at least one point in $M$. The elements of $M$ are called \emph{marked points}, and the pair $(S,M)$ is a \emph{marked surface}.

An \emph{arc} in $(S,M)$ is a curve $\gamma$ in $S$ such that its endpoints are marked points, and it is disjoint from $M$ and $\partial S$ otherwise. An arc is considered up to
isotopy relative to its endpoints. We require that arcs do not self-cross, except possibly at the endpoints and that they are not contractible.  
For each arc 
(i.e. each isotopy class), we will consider the geodesic representative.

\begin{figure}[h]
\centering
\scalebox{0.8}[0.8]{\def\svgwidth{2.5in}
\begingroup%
  \makeatletter%
  \providecommand\color[2][]{%
    \errmessage{(Inkscape) Color is used for the text in Inkscape, but the package 'color.sty' is not loaded}%
    \renewcommand\color[2][]{}%
  }%
  \providecommand\transparent[1]{%
    \errmessage{(Inkscape) Transparency is used (non-zero) for the text in Inkscape, but the package 'transparent.sty' is not loaded}%
    \renewcommand\transparent[1]{}%
  }%
  \providecommand\rotatebox[2]{#2}%
  \newcommand*\fsize{\dimexpr\f@size pt\relax}%
  \newcommand*\lineheight[1]{\fontsize{\fsize}{#1\fsize}\selectfont}%
  \ifx\svgwidth\undefined%
    \setlength{\unitlength}{208.00604153bp}%
    \ifx\svgscale\undefined%
      \relax%
    \else%
      \setlength{\unitlength}{\unitlength * \real{\svgscale}}%
    \fi%
  \else%
    \setlength{\unitlength}{\svgwidth}%
  \fi%
  \global\let\svgwidth\undefined%
  \global\let\svgscale\undefined%
  \makeatother%
  \begin{picture}(1,0.77257777)%
    \lineheight{1}%
    \setlength\tabcolsep{0pt}%
    \put(0,0){\includegraphics[width=\unitlength,page=1]{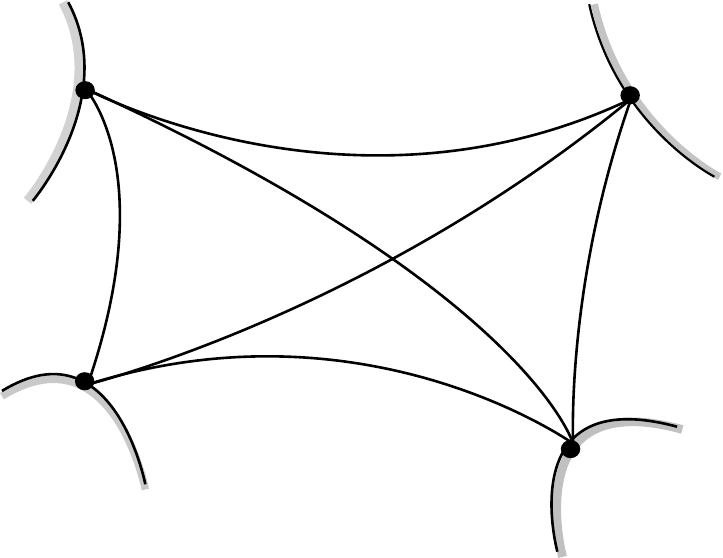}}%
    \put(0.35471195,0.46796702){\color[rgb]{0,0,0}\makebox(0,0)[lt]{\lineheight{1.25}\smash{\begin{tabular}[t]{l}$\alpha$\end{tabular}}}}%
    \put(0.65141051,0.4207639){\color[rgb]{0,0,0}\makebox(0,0)[lt]{\lineheight{1.25}\smash{\begin{tabular}[t]{l}$\beta$\end{tabular}}}}%
    \put(0.41663052,0.59908879){\color[rgb]{0,0,0}\makebox(0,0)[lt]{\lineheight{1.25}\smash{\begin{tabular}[t]{l}$\delta$\end{tabular}}}}%
    \put(0.41845158,0.22393495){\color[rgb]{0,0,0}\makebox(0,0)[lt]{\lineheight{1.25}\smash{\begin{tabular}[t]{l}$\gamma$\end{tabular}}}}%
    \put(0.10885862,0.44429226){\color[rgb]{0,0,0}\makebox(0,0)[lt]{\lineheight{1.25}\smash{\begin{tabular}[t]{l}$\epsilon$\end{tabular}}}}%
    \put(0.83549156,0.39330044){\color[rgb]{0,0,0}\makebox(0,0)[lt]{\lineheight{1.25}\smash{\begin{tabular}[t]{l}$\theta$\end{tabular}}}}%
  \end{picture}%
\endgroup%
}
\caption{Ptolemy relation: $\lambda_\alpha \lambda_\beta = \lambda_\gamma \lambda_\delta + \lambda_\epsilon \lambda_\theta$.}
\label{fig:ptolemy}
\end{figure}

Two arcs $\gamma_1$ and $\gamma_2$ in $S$ are \emph{non-crossing} if there exist curves in their relative isotopy classes that are non-intersecting except possibly at their endpoints. A \emph{triangulation} $T$ of $(S,M)$ is defined as a maximal collection of pairwise non-crossing arcs.

The marked surface $(S,M)$ can be endowed with a hyperbolic metric having a cusp at each marked point.
After choosing a horocycle at every  marked point, 
each arc $\gamma$ can be assigned a finite number called {\it $\lambda$-length} and denoted by $\lambda_\gamma$, see~\cite{Pe} (it is defined as $\lambda_\gamma=e^{l/2}$, where $l$ is the signed hyperbolic distance between the horocycles centred at the endpoints of $\gamma$). 


The most important property that will be used throughout the article is the following.

\begin{remark}\label{remark-ptolemy}
For each quadrilateral as in Fig. \ref{fig:ptolemy} the $\lambda$-lengths  satisfy the \emph{Ptolemy relation.}
\end{remark}


Entries in a Conway\,--\:Coxeter frieze can be interpreted as $\lambda$-lengths on the disk in such a way that the 1's on the trivial rows correspond to boundary edge $\lambda$-lengths. In this interpretation, the diamond rule is a particular case of the Ptolemy relation. More explicitly, with the notation of Fig.~\ref{fig:ptolemy}, if the arcs $\epsilon$ and $\theta$ or $\delta$ and $\gamma$ are boundary edges then we recover the diamond rule.

\begin{question}\label{question-unitary} Let $(S,M)$ be a marked surface with a hyperbolic structure having a cusp at each marked point.
Suppose that we associate  a  horocycle to every marked point in such a way that for each arc the $\lambda$-length is a positive integer, and all boundary segments have unit $\lambda$-lengths. Does there exist a triangulation $T$ of the surface in which $\lambda_\alpha = 1$ for each $\alpha \in T$?
\end{question}

\begin{remark}
  \label{poly+annulus}
  Conway and Coxeter~\cite{CoCo,CoCo2} give an affirmative answer to Question~\ref{question-unitary} for the case when $(S,M)$ is a disk with boundary marked points.
 Also,   it was proven by Gunawan and Schiffler~\cite{GuS} that the answer is positive when $S$ is an annulus  with boundary marked points.

\end{remark}  


In Section~\ref{section-cluster-friezes}, we explain how  Question~\ref{question-unitary} is related to recent developments in cluster theory.  In Section~\ref{section-main-result}, we show that the answer to Question~\ref{question-unitary} is affirmative for a family of marked surfaces arising from a pair of pants.


\section{Friezes from cluster theory}\label{section-cluster-friezes}

In this section we discuss the relation of  Question~\ref{question-unitary} to the theory of cluster algebras. 
One can find a good overview on friezes and their interplay with cluster theory written by Morier--Genoud~\cite{Mo}, and a nice introduction to cluster algebras and cluster algebras of surface type in~\cite[Chapter 3]{Ass}. 

\smallskip

Let $Q$ be a cluster quiver, i.e. $Q$ is without loops and $2$-cycles. We may consider frieze patterns as homomorphisms from the cluster algebra associated with $Q$ to the integers as appeared in~\cite{GuS} and~\cite{FP}.

\begin{definition} Let $Q$ be a cluster quiver and $\mathcal{A}(Q)$ is the associated cluster algebra with trivial coefficients.
\begin{enumerate}
\item A \emph{frieze} or \emph{cluster frieze} associated to $\mathcal{A}(Q)$ is a ring homomorphism $\lambda: \mathcal{A}(Q) \to \mathbb Z$.
\item A frieze $\lambda$ is \emph{positive} if for any cluster variable $x \in \mathcal{A}(Q)$, its image $\lambda(x)$ is in $\mathbb Z_+$. 
\item A positive 
  frieze $\lambda$ is \emph{unitary} if there exists a cluster $X$ in $\mathcal{A}(Q)$ such that every cluster variable $x_i \in X$ is mapped to $1$ by $\lambda $.
\end{enumerate}
\end{definition}

Consider the  repetition quiver of Dynkin type $A$, $D$ or $E$, or affine type (see~\cite[Section 2.1]{Mo}). Set positive entries in this quiver in such a way that the diamond rule now is now adapted to the mesh rules. In Fig.~\ref{fig:frieze-cluster-category} we can see  an example of cluster frieze of type $A_3$ by the evaluation $x_i \to 1$ for each $i$. 

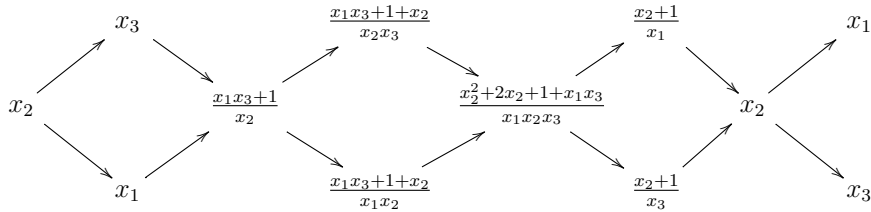
\begin{figure}[ht]
$$\scalebox{0.85}{
\xymatrix@R-10pt@C-10pt{ 
&\zehn x_3\zehn  \ar[rd] &&
 \frac{x_1x_3+1+x_2}{x_2x_3} \ar[rd] &&
 \frac{x_2+1}{x_1} \ar[rd] &&
 \zehn x_1
 \zehn \\
\fuenf x_2\fuenf \ar[rd]\ar[ru] &&
 \frac{x_1x_3+1}{x_2} \ar[rd]\ar[ru] &&
\fuenfm \fuenfm \frac{x_2^2+2x_2+1+x_1x_3}{x_1x_2x_3}\fuenfm\fuenfm \ar[rd]\ar[ru] &&
 \fuenf x_2\fuenf  \ar[rd]\ar[ru] &&
\\
& x_1 \ar[ru] &&
  \frac{x_1x_3+1+x_2}{x_1x_2} \ar[ru] &&
  \frac{x_2+1}{x_3} \ar[ru] &&
 x_3 \\
} 
}
$$
\caption{Cluster variables displayed in  the repetition quiver of type $A_3$. The frieze in Fig. \ref{f-polygon} is obtained via evaluation $x_i \to 1$, and adding the first and last rows with all 1's.}
\label{fig:frieze-cluster-category}
\end{figure}

The mesh rule in type $D$ is interpreted as follows: whenever there is a configuration $\begin{smallmatrix}&a\\b&&c\\d&&e\end{smallmatrix}$ of neighboring entries in the mesh quiver then $bc-a=de-a=1$, and whenever there is $\begin{smallmatrix}&a\\b&&c\\&d\\&e\end{smallmatrix}$ then $bc-ade=1$, see Fig.~\ref{fig:friezeD}.

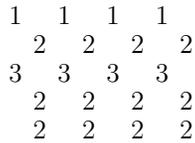
\begin{figure}[h!]
$$\scalebox{.85}{\xymatrix@C=-2pt@R=-2pt{
1&&1&&1&& 1\\
& 2&& 2 && 2 && 2 \\
3&& 3 && 3 && 3 \\
 & 2 && 2 && 2 && 2 \\
 &2 && 2 && 2 && 2 }}$$
\caption{Frieze of type $D_4$.}
\label{fig:friezeD}
\end{figure}

\begin{remark}\label{remark-friezes-cluster}
The results of Conway and Coxeter~\cite{CoCo2} and of Gunawan and Schiffler~\cite[Theorem 4.2]{GuS} mentioned in Remark~\ref{poly+annulus} can be now reformulated as follows: {\it all positive friezes of type $A$ and $\widetilde A$, respectively, are unitary}.
\end{remark}

\begin{remark}\label{remark-non-unitary}
Not all friezes of type $D$, $E$, $\widetilde{D}$ and $\widetilde{E}$ are unitary, see~\cite{FP} and \cite[Section 4.2.1]{GuS}. For instance, the frieze in Fig.~\ref{fig:friezeD} is an example of a non-unitary frieze of type $D$.
\end{remark}

Fomin, Shapiro and Thurston defined cluster algebras of surface type in~\cite{FST}. Fomin and Thurston continued this development in~\cite{FT}.  In these articles, the authors find a correspondence between  $\lambda$-lengths of arcs and cluster variables, so that  a triangulation represents a cluster and mutations of the cluster can be interpreted as flips of arcs in the triangulation of the surface~\cite[Proposition 7.6]{FT}.  
Fixing a triangulation $T$ of $(S,M)$, the collection of $\lambda$-lengths corresponding to the
arcs in the triangulation (including those for boundary segments) forms a system of coordinates
for the decorated Teichm\"uller space. Choosing another triangulation results in a different coordinate chart, but
all the triangulations for $(S,M)$ are  related by sequences of flips, and the cluster variables in the adjacent clusters are related by Ptolemy relations.

\smallskip


We can think that a (positive) frieze $\lambda$ over a cluster algebra of surface type is defined by assigning  to each cluster variable, i.e. each arc, a $\lambda$-length in $\mathbb{Z}_+$, see Definition~\ref{fr}.

The question of a frieze being unitary has been studied  for cluster algebras of acyclic types $A$, $D$, $E$, $\widetilde{A}$, $\widetilde{D}$ and $\widetilde{E}$, see Remarks~\ref{remark-friezes-cluster}~and~\ref{remark-non-unitary}.  However, most cluster algebras of surface type are non-acyclic. In the next section we propose a new approach to answer Question~\ref{question-unitary} in a non-acyclic setting.


\section{Friezes for a pair of pants}\label{section-main-result}

Let $S$ be a topological surface known as \emph{a pair of pants}, i.e., $S$ is a sphere with three open disks removed.
Let $M$ be a set of marked points on the boundary of $S$ such that each boundary component contains at least one marked point. Consider the marked surface $(S,M)$. 

We have defined (cluster) friezes in terms of corresponding cluster algebras. Restricting the definition to the case of cluster algebras from surfaces and using results of~\cite{FST} and~\cite{FT} we obtain the following reformulation.

\begin{definition}
\label{fr}
  By a \emph{positive frieze} $\lambda(S,M)$ from the marked surface $(S,M)$ we will understand a map
 $\lambda: \gamma \to \lambda_\gamma$  from the set of arcs on $S$ to positive integers  $\mathbb{Z}_+$  such that the Ptolemy relation is satisfied for each quadrilateral on $S$. 
\end{definition}

We denote the map above $\lambda$ (and  call $\lambda_\gamma$ a {\it $\lambda$-length} of $\gamma$) as every such map can be understood as taking $\lambda$-lengths of arcs for a decorated hyperbolic structure on $(S,M)$. At the same time, one can understand this map formally, without remembering about the hyperbolic structure.

The following notions will be used in the statement and the proof of our main result. 

\begin{definition} Let $(S,M)$ be a marked surface and $\lambda=\lambda(S,M)$ be a frieze.
  \begin{enumerate}
  \setlength\itemsep{0.01em} 

\item A triangulation $T$ of $(S,M)$ is {\it unitary} for $\lambda$ if  $\lambda_\gamma=1$ for each $\gamma\in T$.   
\item The frieze  $\lambda$ is \emph{unitary} if there exists a unitary triangulation $T$ of $(S,M)$.
   
\item A non-boundary arc $\gamma$ on $S$ is \emph{peripheral} if it can be isotopically deformed to the boundary of $S$, otherwise $\gamma$ is \emph{bridging}. 
\item  A \emph{short diagonal} is a peripheral arc that can be isotopically deformed to a union of two consecutive  boundary segments.  
\item A surface together with a frieze  $\lambda$ is \emph{reduced} if there is no peripheral arc $\gamma\in (S,M)$ such that $\lambda_\gamma=1$.
\end{enumerate}
\end{definition}

\begin{theorem}
  Let $(S,M)$ be a pair of pants with a set of marked points $M$,  and let $\lambda(S,M)$ be a positive frieze such that $\lambda_\delta=1$ for every boundary arc $\delta$. Then
  \begin{itemize}
  \item[(a)] the frieze $\lambda(S,M)$  is unitary;
\item[(b)]the unitary triangulation $T$ of $(S,M)$  is unique; 
\item[(c)] for any short diagonal $\alpha$ in  $(S,M)$,  its image $\lambda_\alpha$ is equal to the number of triangles of $T$ crossed by $\alpha$, where $T$ is the unitary triangulation from part (b). 
\end{itemize}  
  \end{theorem}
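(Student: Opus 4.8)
The plan is to reduce the general pair of pants to a small number of ``small'' cases by a sequence of moves that shrink the surface while preserving the frieze data, then handle the base cases by hand. First I would set up the combinatorial structure of a pair of pants $(S,M)$ with boundary components carrying $n_1,n_2,n_3$ marked points: its non-boundary arcs are either peripheral (cutting off a once-punctured disk along one boundary component, i.e. short diagonals and their ``long'' analogues wrapping around more marked points) or bridging (connecting two distinct boundary components, or a boundary component to itself while separating the other two). The key preliminary observation is a \emph{reduction step}: if the surface-with-frieze is not reduced, there is a peripheral arc $\gamma$ with $\lambda_\gamma=1$; cutting along $\gamma$ splits $(S,M)$ into a polygon (a disk with boundary marked points) glued to a smaller pair of pants $(S',M')$, and the Ptolemy relations restrict to a positive frieze on each piece. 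On the polygon side we invoke Conway--Coxeter (Remark~\ref{poly+annulus}) to get a unitary triangulation; on the pair-of-pants side we induct. So one may assume $(S,M)$ is reduced, and the first real task is to show that a reduced pair of pants has very few marked points — ideally $n_1=n_2=n_3=1$, or a short explicit list — using the Ptolemy relation to derive a contradiction from any peripheral arc being forced to have $\lambda$-length $1$ whenever a boundary component carries $\ge 2$ marked points. This ``reduced implies small'' step is where I expect the main difficulty to lie: one must produce, from a boundary arc of $\lambda$-length $1$ and the two arcs flanking an interior marked point, a peripheral arc whose $\lambda$-length is forced to be $1$, and the Ptolemy bookkeeping here (especially controlling the long peripheral arcs, not just short diagonals) is the delicate part.

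For part (a), once we are in the reduced base case(s), I would exhibit an explicit triangulation $T_0$ of the pair of pants — concretely, pick two disjoint bridging arcs $\alpha,\beta$ connecting the three boundary components pairwise together with one more bridging arc $\eta$, cutting $S$ into two triangles (hexagon-type pieces) — and argue directly from the Ptolemy relations that all of $\lambda_\alpha,\lambda_\beta,\lambda_\eta$, and the remaining bridging arcs in $T_0$ equal $1$. Here positivity is essential: a relation of the form $\lambda_a\lambda_b = \lambda_c\lambda_d + \lambda_e\lambda_f$ with the right side $\ge 1+1$ and strong constraints forces small values, and I would chase a minimal configuration (e.g. a bridging arc of minimal $\lambda$-length, or minimal product over a quadrilateral) to pin everything down to $1$. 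The reduction step plus the base-case analysis then gives (a) in general: unitary triangulation of the polygon pieces $\cup$ unitary triangulation of the reduced core.

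For part (b), uniqueness, I would argue that any unitary triangulation must contain every peripheral arc of $\lambda$-length $1$ (two arcs of $\lambda$-length $1$ that cross force, via Ptolemy, a contradiction with positivity — this is the standard ``unit arcs are pairwise non-crossing'' lemma, which I would state and prove first), so all unitary triangulations share the same peripheral part; cutting along that shared peripheral part reduces uniqueness to uniqueness in the reduced core, which in the base case can be checked directly (the core has only finitely many triangulations and only one is unitary). For part (c), I would realize a short diagonal $\alpha$, which cuts off a triangle with two boundary edges, and compute $\lambda_\alpha$ by repeatedly applying Ptolemy across the triangles of $T$ that $\alpha$ traverses: inducting on the number $k$ of triangles crossed, each crossing contributes a Ptolemy relation of the form $\lambda_{\alpha_i}\cdot 1 = \lambda_{\alpha_{i-1}}\cdot 1 + 1\cdot 1$ (using that the edges of $T$ and the boundary all have $\lambda$-length $1$), giving $\lambda_{\alpha_i}=\lambda_{\alpha_{i-1}}+1$ and hence $\lambda_\alpha=k$ — exactly the Conway--Coxeter ``count the triangles'' phenomenon, now for peripheral arcs of the pair of pants. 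The main obstacle remains the first step: proving that reducedness forces the surface into the short base-case list, and correctly handling the long (non-short-diagonal) peripheral arcs in that argument.
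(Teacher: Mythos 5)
Your reduction step, your uniqueness argument for (b) (two crossing arcs of unit $\lambda$-length violate the Ptolemy relation together with positivity), and your induction for (c) all match the paper. But the core of your plan for (a) rests on a false intermediate claim: that a \emph{reduced} pair of pants must have very few marked points (``ideally $n_1=n_2=n_3=1$, or a short explicit list''). This is not true. A pair of pants with arbitrarily many marked points admits triangulations consisting entirely of bridging arcs; if such a triangulation is unitary, then every short diagonal already has $\lambda$-length at least $2$ (it equals the number of triangles it crosses, as in part (c)), and longer peripheral arcs are longer still, so no peripheral arc has unit $\lambda$-length and the surface is reduced. The same phenomenon occurs for the annulus: in the zigzag unitary triangulation by bridging arcs, Ptolemy gives $\lambda_{u_1u_3}\cdot 1 = 1\cdot 1+1\cdot 1=2$ for every short diagonal, so the annulus is reduced no matter how many marked points it carries. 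Consequently your ``reduced implies small'' step fails, and the explicit base-case analysis you propose does not cover the general reduced surface.

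What the paper does instead, after the same reduction, is cut the reduced pair of pants along a bridging arc $\gamma$ of minimal $\lambda$-length $k$ joining two distinct boundary components, producing an annulus. If $k=1$ one invokes the Gunawan--Schiffler theorem that positive friezes on an annulus with unit boundary are unitary (Remark~\ref{poly+annulus}) and glues back. The real work is ruling out $k>1$: one builds a triangulation of the annulus by greedily chosen shortest bridging arcs, lifts it to the universal cover, and shows by a case analysis of Ptolemy relations that the resulting sequence of $\lambda$-lengths is non-decreasing and strictly increases across a ``good quadrilateral'', which must exist --- contradicting the periodicity coming from the deck transformation. Your instinct to chase a bridging arc of minimal $\lambda$-length is the right one, but it must be deployed on the full reduced surface via this annulus/universal-cover mechanism rather than on a small base case; and the annulus result of Gunawan--Schiffler, which your plan never uses, is an essential input.
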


  \begin{proof}

  First, we will prove part (a).

\medskip

\noindent
{\bf Reducing $S$.} First, we reduce $S$ as follows: if $S$ has a peripheral arc $\psi$ such that $\lambda_\psi=1$, then we cut $S$ along $\psi$ and separate a polygon $P_{\psi}$ from $S$ (in this polygon all boundary arcs are of unit $\lambda$-length). After cutting $P_{\psi}$ from $S$, the number of boundary marked points will decrease. Then we repeat the process: if there is a peripheral arc  of $\lambda$-length 1 in the resulting surface $S\setminus P_\psi$, we cut along it. If the number of marked points on some boundary component equals 1 then there are no peripheral arcs on that boundary anymore. So, after cutting finitely many times we will get a reduced surface $\widetilde{S}$.
Notice that if we know that $\widetilde{S}$ has a unitary triangulation then we can obtain a unitary triangulation for $S$ by applying the Conway\,--\:Coxeter result to each of the polygons cut out in the process of reducing $S$. So, it is sufficient to show the statement for the reduced surface $\widetilde{S}$. 
From now on we will assume that $S$ is reduced.

  \medskip
  \noindent
  {\bf Cutting $S$.}
  Let $\gamma$ be a shortest bridging arc whose endpoints don't lie on the same boundary component on $S$, i.e. $\lambda_\gamma=k$ where $k$ is minimal over all such bridging arcs $\gamma$. We will cut $S$ along $\gamma$ and denote by $S'$ the obtained annulus, see  Fig.~\ref{f-2}.

  Suppose that $k=1$. Then $S'$ together with the restriction of $\lambda$ to $S'$ is a positive frieze from an annulus such that $\lambda_\delta=1$ for each boundary arc $\delta$.
  Every such frieze is unitary~\cite{GuS}. Gluing the surface back along $\gamma$ we obtain a unitary triangulation on $S$.

\begin{figure}[h]
\centering
\scalebox{0.7}[0.7]{\def\svgwidth{6in}
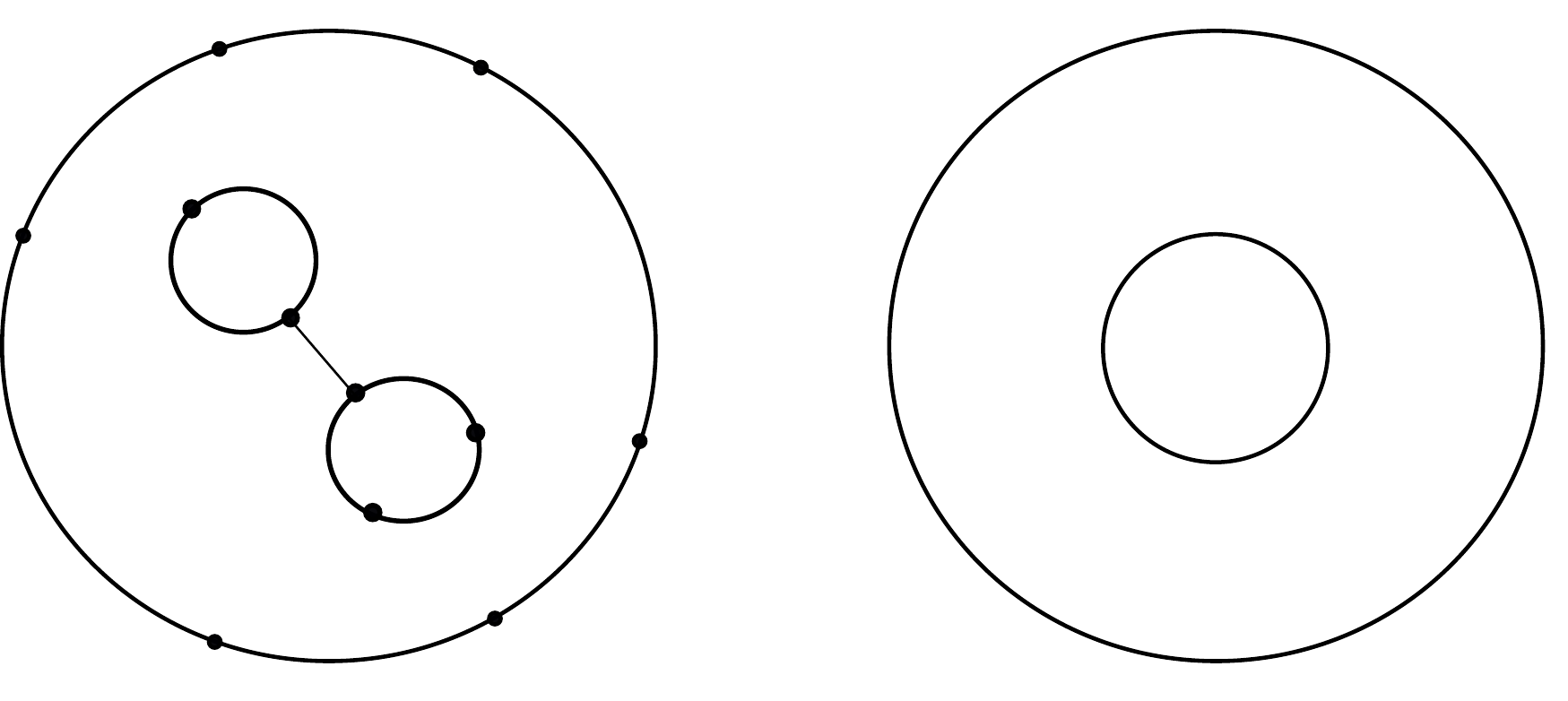}
\caption{After cutting $S$ along the arc $\gamma$ with $\lambda_{\gamma}=k$ we obtain the surface $S'$ (right).}
\label{f-2}
\end{figure}

  From now on we assume $k>1$, i.e. no bridging arc on $S'$ is of unit $\lambda$-length. Our aim now is to obtain a contradiction.

  \medskip
  \noindent
  {\bf Constructing a triangulation on $S'$.}
  Choose $\gamma_0$ to be one of the shortest bridging arcs on $S'$ (whose endpoints necessarily lie in the two different boundaries of $S'$ since $S'$ is an annulus). Next, choose $\gamma_1$ to be a shortest bridging arc on $S'$ not intersecting $\gamma_0$. Choose inductively $\gamma_i$ to be a shortest bridging arc on $S'$ not intersecting any of the previously chosen arcs $\gamma_0,\gamma_1\dots,\gamma_{i-1}$. After repeating this finitely many times we will arrive to a triangulation of $S'$ consisting only of bridging arcs.
  Observe that by this construction we have an annulus with $\lambda_\delta = 1$ for each boundary arc $\delta$, except for two arcs of $\lambda$-length $k>1$ lying in the same boundary component and not following one another consecutively, see  Fig.~\ref{f-2}, right.
  
Lift the triangulation to the universal cover, so that we obtain a triangulation $T$  of an infinite strip $\Sigma$ by bridging arcs, see  Fig.~\ref{f-1a} (left).

We will label the arcs on $\Sigma$ as follows. Let $\alpha_0$ be a lift of $\gamma_0$. Let $l$ be  the line obtained as the lift of the non-contractible simple closed curve which is disjoint from the boundary (see \cite[Section 3]{Pe}). It will cross all lifts of all non-peripheral arcs.  Choose an orientation of $l$. 
We label the arc $\alpha_i$ if the crossing of $l$ with the arc is the $i$-th crossing of $l$ with the triangulation when following $l$ from $\alpha_0$ in the positive direction, see Fig.~\ref{f-1a} (left). 
Denote $a_i=\lambda_{\alpha_i}$ for $i\in \mathbb{Z}$.



  \medskip
  \noindent
  {\bf Claim 1:}
  Let $\beta$ be a short diagonal enclosing two boundary segments, one of unit $\lambda$-length and the other one of $\lambda$-length $k$, as in Fig. \ref{f-1a} (right) (notice that $\beta \notin T$). Then $\lambda_\beta > k$.

  \begin{figure}[ht]
\centering
\scalebox{0.8}[0.75]{\def\svgwidth{4.8in}
\begingroup%
  \makeatletter%
  \providecommand\color[2][]{%
    \errmessage{(Inkscape) Color is used for the text in Inkscape, but the package 'color.sty' is not loaded}%
    \renewcommand\color[2][]{}%
  }%
  \providecommand\transparent[1]{%
    \errmessage{(Inkscape) Transparency is used (non-zero) for the text in Inkscape, but the package 'transparent.sty' is not loaded}%
    \renewcommand\transparent[1]{}%
  }%
  \providecommand\rotatebox[2]{#2}%
  \newcommand*\fsize{\dimexpr\f@size pt\relax}%
  \newcommand*\lineheight[1]{\fontsize{\fsize}{#1\fsize}\selectfont}%
  \ifx\svgwidth\undefined%
    \setlength{\unitlength}{351.20191075bp}%
    \ifx\svgscale\undefined%
      \relax%
    \else%
      \setlength{\unitlength}{\unitlength * \real{\svgscale}}%
    \fi%
  \else%
    \setlength{\unitlength}{\svgwidth}%
  \fi%
  \global\let\svgwidth\undefined%
  \global\let\svgscale\undefined%
  \makeatother%
  \begin{picture}(1,0.29412865)%
    \lineheight{1}%
    \setlength\tabcolsep{0pt}%
    \put(0,0){\includegraphics[width=\unitlength,page=1]{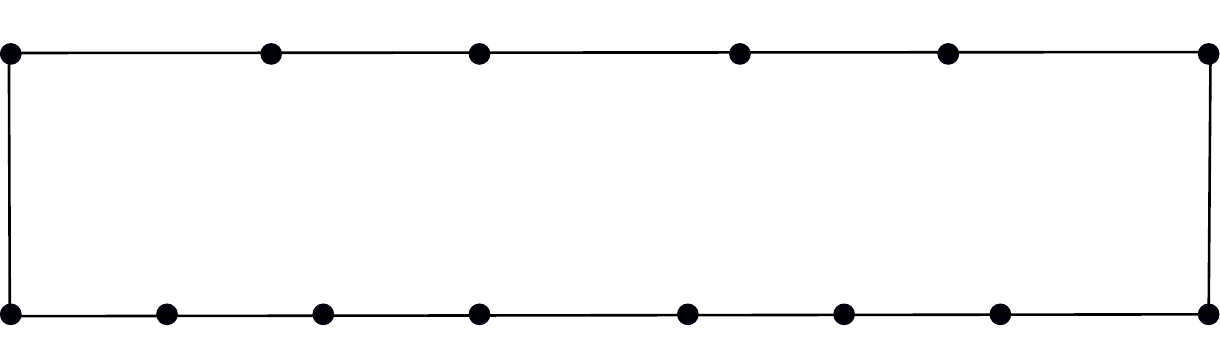}}%
    \put(0.31924324,0.00401523){\color[rgb]{0,0,0}\makebox(0,0)[lt]{\lineheight{1.25}\smash{\begin{tabular}[t]{l}$k$\end{tabular}}}}%
    \put(0.75061864,0.00401523){\color[rgb]{0,0,0}\makebox(0,0)[lt]{\lineheight{1.25}\smash{\begin{tabular}[t]{l}$k$\end{tabular}}}}%
    \put(0,0){\includegraphics[width=\unitlength,page=2]{strip.pdf}}%
    \put(0.01828701,0.09370719){\color[rgb]{0,0,0}\makebox(0,0)[lt]{\lineheight{1.25}\smash{\begin{tabular}[t]{l}$\alpha_0$\end{tabular}}}}%
    \put(0,0){\includegraphics[width=\unitlength,page=3]{strip.pdf}}%
    \put(0.06099739,0.19194134){\color[rgb]{0,0,0}\makebox(0,0)[lt]{\lineheight{1.25}\smash{\begin{tabular}[t]{l}$\alpha_1$\end{tabular}}}}%
    \put(0.17204462,0.07235199){\color[rgb]{0,0,0}\makebox(0,0)[lt]{\lineheight{1.25}\smash{\begin{tabular}[t]{l}$\alpha_2$\end{tabular}}}}%
    \put(0,0){\includegraphics[width=\unitlength,page=4]{strip.pdf}}%
    \put(0.01630397,0.27659956){\color[rgb]{0,0,0}\makebox(0,0)[lt]{\lineheight{1.25}\smash{\begin{tabular}[t]{l}$l$\end{tabular}}}}%
  \end{picture}%
\endgroup%
} \hspace{15pt} {\def\svgwidth{2.2in}
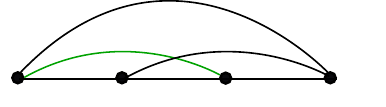} 
\caption{Left: triangulation of $\Sigma$ by bridging arcs. Right: a quadrilateral in $\Sigma$ }
\label{f-1a}
\end{figure}  

 \medskip
  \noindent
  {\it Proof of Claim 1:} Let $v_1v_2v_3v_4$ be a quadrilateral in $\Sigma$ such that all vertices are consecutive marked points on the same boundary component, so that $\beta = v_1 v_3$. We have $\lambda_{v_1v_2}=\lambda_{v_3v_4}=1$ and $\lambda_{v_2v_3}=k$ by construction. The Ptolemy relation on the quadrilateral  $v_1v_2v_3v_4$ gives
  $$\lambda_{v_1v_3}\lambda_{v_2v_4}=k\lambda_{v_1v_4}+1.
  $$
Notice that both $v_1v_3$ and $v_2v_4$ were bridging arcs in $S'$ since they cross over the boundary segment of $\lambda$-length $k$, so we have $\lambda_{v_1v_3}\ge k$. 
Suppose that $\lambda_{v_1v_3}=k$.  Then the left hand side of the equation above is divisible by $k$, while the right hand side is not, which is impossible. So, we conclude  $\lambda_{v_1v_3}>k$. \\ 
{\it End of the proof of Claim 1.}

  \medskip
  \noindent
  {\bf Claim 2:} $a_{i+1}\ge a_i$ for all $i\ge 0$.

 \medskip
  \noindent
  {\it Proof of Claim 2:}
  For $i=0$ the claim follows from the construction of the triangulation as $\alpha_0$ is the shortest bridging arc on $S'$.
  Fix some $i>0$ and suppose that the claim holds for all $i' < i$.
  
   \begin{figure}[h]
\centering
\scalebox{0.8}[0.7]{\def\svgwidth{3.7in}
\begingroup%
  \makeatletter%
  \providecommand\color[2][]{%
    \errmessage{(Inkscape) Color is used for the text in Inkscape, but the package 'color.sty' is not loaded}%
    \renewcommand\color[2][]{}%
  }%
  \providecommand\transparent[1]{%
    \errmessage{(Inkscape) Transparency is used (non-zero) for the text in Inkscape, but the package 'transparent.sty' is not loaded}%
    \renewcommand\transparent[1]{}%
  }%
  \providecommand\rotatebox[2]{#2}%
  \newcommand*\fsize{\dimexpr\f@size pt\relax}%
  \newcommand*\lineheight[1]{\fontsize{\fsize}{#1\fsize}\selectfont}%
  \ifx\svgwidth\undefined%
    \setlength{\unitlength}{338.53470314bp}%
    \ifx\svgscale\undefined%
      \relax%
    \else%
      \setlength{\unitlength}{\unitlength * \real{\svgscale}}%
    \fi%
  \else%
    \setlength{\unitlength}{\svgwidth}%
  \fi%
  \global\let\svgwidth\undefined%
  \global\let\svgscale\undefined%
  \makeatother%
  \begin{picture}(1,0.24675352)%
    \lineheight{1}%
    \setlength\tabcolsep{0pt}%
    \put(0,0){\includegraphics[width=\unitlength,page=1]{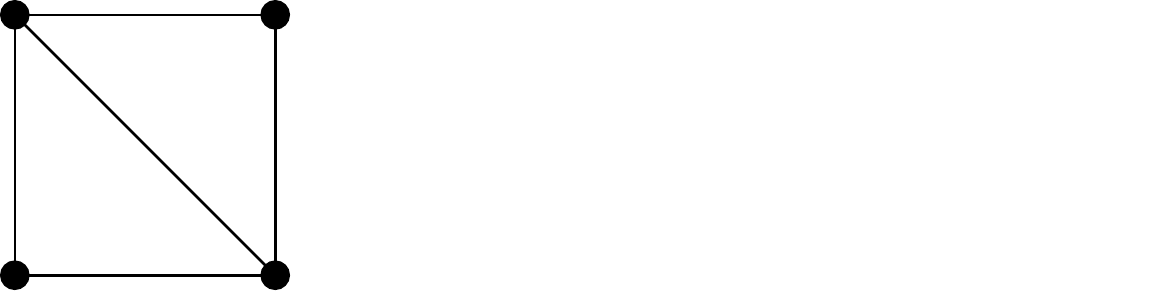}}%
    \put(0.02447369,0.07115594){\color[rgb]{0,0,0}\makebox(0,0)[lt]{\lineheight{1.25}\smash{\begin{tabular}[t]{l}$a_{i-1}$\end{tabular}}}}%
    \put(0.12337679,0.14315738){\color[rgb]{0,0,0}\makebox(0,0)[lt]{\lineheight{1.25}\smash{\begin{tabular}[t]{l}$a_i$\end{tabular}}}}%
    \put(0.24997285,0.12654166){\color[rgb]{0,0,0}\makebox(0,0)[lt]{\lineheight{1.25}\smash{\begin{tabular}[t]{l}$a_{i+1}$\end{tabular}}}}%
    \put(0,0){\includegraphics[width=\unitlength,page=2]{figure3.pdf}}%
    \put(0.55538565,0.07273839){\color[rgb]{0,0,0}\makebox(0,0)[lt]{\lineheight{1.25}\smash{\begin{tabular}[t]{l}$a_{i-1}$\end{tabular}}}}%
    \put(0.69938846,0.09441794){\color[rgb]{0,0,0}\makebox(0,0)[lt]{\lineheight{1.25}\smash{\begin{tabular}[t]{l}$a_i$\end{tabular}}}}%
    \put(0.76395276,0.13983425){\color[rgb]{0,0,0}\makebox(0,0)[lt]{\lineheight{1.25}\smash{\begin{tabular}[t]{l}$a_{i+1}$\end{tabular}}}}%
  \end{picture}%
\endgroup%
}
\caption{Quadrilaterals in $T$.}
\label{f-3}
\end{figure}

  Denote by $t_n$ the triangle of $T$ with sides $\alpha_{n},\alpha_{n+1}$ and a boundary side.  
  Consider the quadrilateral $q_i$ formed by triangles $t_{i-1}$ and $t_i$. Up to swapping top boundary of the strip with the bottom, the quadrilateral $q_i$ looks like one of the two possibilities in Fig.~\ref{f-3}. We will consider the  Ptolemy relation for each of these possibilities.

  Notice that $k \leq a_i$ since $k$ is the length of the shortest bridging arc on $S$ and all $\alpha_i$ are bridging arcs. Moreover, at most one of any two adjacent boundary arcs on $\Sigma$ can have length $k$ while the other is of length 1. And only one boundary component for $\Sigma$ contains arcs of length $k$. This implies that we are left to consider the five cases I-V listed in Fig. \ref{f-4}. We will denote by $\alpha_i'$ the diagonal crossing $\alpha_i$ in the quadrilateral $q_i$.

\begin{figure}[h]
\centering
\scalebox{0.8}[0.7]{\def\svgwidth{5.8in}
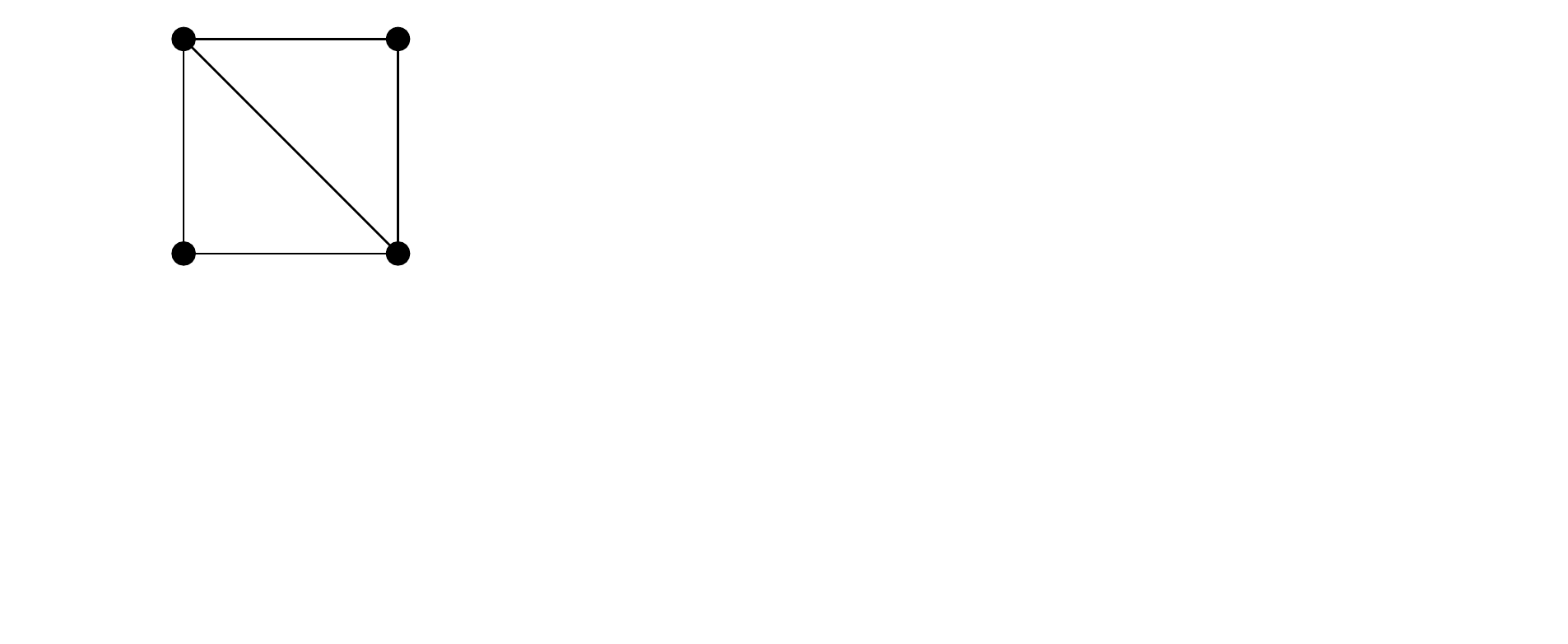}
\caption{Cases I - V.}
\label{f-4}
\end{figure}

  \begin{itemize}
  \item[(I)] By the  Ptolemy relation we have
    $$ a_ia_{i}'=a_{i-1}a_{i+1}+1.  $$
    Here, we know that $a_{i-1}\le a_i$ by inductive assumption and $a_i\le a_i'$ by the construction of the triangulation. Moreover, we also know $a_j\ge k> 1$ for any $j$.
    Suppose that $a_{i+1}<a_i$, i.e. $a_{i+1}=a_i-s$, for $s \geq 1$. Then
    $$a_ia_i'=a_{i-1}(a_i-s)+1= a_{i-1}a_i+(1-s\cdot a_{i-1})\le a_i a_i' + (1-s\cdot a_{i-1}) < a_i a_i',
    $$
    which is a contradiction.

  \item[(II)] In this case the Ptolemy relation gives    
   $$ a_ia_{i}'=a_{i-1}a_{i+1}+k,  $$
and similarly to the above we assume  $a_{i+1}=a_i-s$, $s\ge 1$, and compute  
    $$a_ia_i'=a_{i-1}(a_i-s)+k= a_{i-1}a_i+(k-s\cdot a_{i-1})\le a_i a_i' + (k-s\cdot a_{i-1}) \le a_i a_i'.
    $$
    Notice that the only way to avoid the contradiction is when both inequalities are identities, that is when
    $s=1$, $a_{i-1}=k$, and $a_{i-1}=a_i$ and $a_i=a_i'$, i.e. when
    $a_{i-1}=a_i=a_i'=k$. However, in that case $a_{i+1}=a_i-s=k-1<k$ which is impossible by the assumption that $k$ is the shortest $\lambda$-length of a bridging arc.

  \item[(III)] By the Ptolemy relation we have
    $$ a_ia_{i}'=a_{i-1}+ a_{i+1},  $$
     assuming  $a_{i+1}=a_i-s$, $s\ge 1$, we get  
     $$ a_ia_{i}'=a_{i-1}+ a_{i+1}=a_{i-1}+a_i-s \le 2a_i-s.  $$
     Notice that $\alpha_i'$ is not a bridging arc in this case (and was not bridging in $S$), so, we cannot assume $a_i'\ge a_i$.
     However, we know $a_i'\ge 2$ as the surface $S$ is reduced. So, we have $2a_i\le a_ia_i'\le 2a_i-s$ which is clearly impossible
     for  $s\ge 1$.

   \item[(IV)]   By the Ptolemy relation we have
    $$ a_ia_{i}'=ka_{i-1}+ a_{i+1}. $$
    Notice that in this case the arc $\alpha_i'$ is a lift of a bridging arc, and hence we have $a_i'\ge k$. 
     Assuming  $a_{i+1}=a_i-s$, $s\ge 1$, we get  
    $$ a_ia_{i}'=ka_{i-1}+ a_{i+1}=ka_{i-1}+a_i-s\le (k+1)a_i -s.  $$
    If $a_{i}'\ge k+1$, this is impossible. The case $a_i'=k$ is also impossible in view of  \emph{Claim 1}  (applied to $a_i'$ is in the role of $\beta$).

  \item[(V)]   By the Ptolemy relation we have
    $$ a_ia_{i}'=a_{i-1}+ ka_{i+1}.  $$
    Again, the arc $\alpha_i'$ is a lift of a bridging arc hence we have $a_i'\ge k$. 
     Assuming  $a_{i+1}=a_i-s$, $s\ge 1$, we get  
    $$ a_ia_{i}'=a_{i-1}+ ka_{i+1}=a_{i-1}+k(a_i-s)\le a_i + ka_i -ks=(k+1)a_i -ks,   $$
    which is impossible when $a_i'\ge k+1$. Since $a_i'\ge a_i\ge k$, this implies that $a_i'=a_i=k$, but then $a_{i+1}\le k-1$
    and this is impossible by construction.
  
\end{itemize}

We conclude that assuming $a_{i+1} < a_i$ in any of the five cases leads to a contradiction.
\\{\it End of the proof of Claim 2.}

\medskip
We see from the \emph{Claim 2} that the sequence of $\lambda$-lengths $(a_i)$ is monotone increasing. Since the arcs are lifts of a finite number of arcs from the annulus, this is only possible if all these arcs have the same  $\lambda$-length, denote it by $a$. Then we have $a\ge k$. 

Let $t_i$ and $t_{i+1}$ be two  adjacent triangles in the triangulation $T$ of $\Sigma$, and let $q_i=t_i\cup t_{i+1}$ be the quadrilateral composed of them. We will say that $q_i$ is a \emph{good quadrilateral} if the boundary arcs of both triangles
  $t_i$ and $t_{i+1}$ are of unit $\lambda$-length, and these boundary arcs lie on different components of the boundary of $\Sigma$ (as in Case I in Fig.~\ref{f-4}).

  \medskip
  \noindent
  {\bf Claim 3:} If $q_i$ is a good quadrilateral, then
  $a_{i+1}> a_i$.

\medskip
  \noindent
  {\it Proof of Claim 3:} We use the Ptolemy relation as in the proof of Case (I) of \emph{Claim 2}, but this time we assume $s=0$ (i.e. $a_{i+1}=a_i$).
 Then we get
 $ a_i a_i'=a_{i-1}a_i+1$, which is impossible since the left hand side is divisible by $a_i > 1$, and the right hand side is not.\\
 {\it End of proof of Claim 3.}  

\medskip



The following statement completes the proof of part (a).

   \medskip
  \noindent
  {\bf Claim 4:} $T$ contains at least one good quadrilateral $q_i$.

  \medskip
  \noindent
  {\it Proof of Claim 4:} Suppose that $T$ is a triangulation by bridging arcs containing no good quadrilaterals.
  
  Let $u_1,\dots, u_m$ be the marked points on one boundary of the annulus $S'$, and $v_1,\dots,v_n$ the points on the other boundary. Without loss of generality we assume that the boundary arcs $u_1u_2$ and $u_lu_{l+1}$,  are of $\lambda$-length $k$ (here $l\ge 3$ and $l+1\le m$), while all other boundary arcs are of $\lambda$-length 1.

\begin{figure}[h]
\centering
\scalebox{0.8}[0.8]{\def\svgwidth{5.2in}
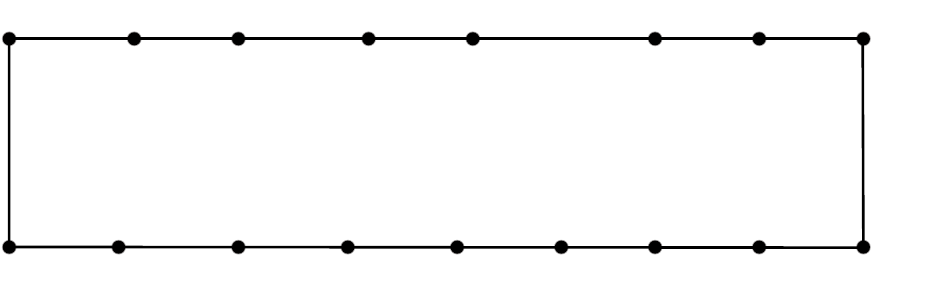}
\caption{Triangulation of $S'$.}
\label{f-gq}
\end{figure}

  Consider the triangle $t_1$ with boundary side $u_2u_3$ and suppose that $v_1$ is the third vertex of $t_1$ (we can assume this without loss of generality after shifting the numeration of the $v_i$'s). Since $T$ contains no good quadrilateral, $v_1u_1$ and $v_1u_4$ are arcs of $T$, otherwise, there is a good quadrilateral $v_1v_2u_3u_2$ or $v_1u_3u_2v_n$. Similarly, we see that all of  $v_1u_4,v_1u_5, \dots,v_1u_l$ should be arcs in $T$, and moreover, $v_1u_{l+1}\in T$. We obtain that all triangles  $t_0, \ldots, t_{l-1}$ lie in one fan with vertex $v_1$.

  Consider the $l$ triangles described above, and let $C$ be the complement of that region on $S'$: $C$ is a polygon with sides $v_1u_1$ and $v_1u_{l+1}$, and all other edges are boundary arcs of $\lambda$-length $1$ (see Fig. \ref{f-gq}). Notice that the restriction of $T$ to the polygon $C$ consists of bridging arcs, connecting the top boundary to the bottom. It is easy to see that such a triangulation contains a good quadrilateral since $l\ge 3$ and $l+1\le m$ so, as a minimum, the polygon $C$ is a square $v_1 u_{l+1} u_1 v_1$. Hence it has a bridging arc and it contains a good quadrilateral.\\
{\it End of proof of Claim 4 and part (a).}

\medskip

Now, we will prove (b). Suppose that $T$ and $T'$ are two unitary triangulations for the same frieze $\lambda(S,M)$.
Let $\alpha\in T'$ be an arc,  $\alpha\notin T$. Since $\alpha\notin T$, there is an arc $\gamma\in T$ crossed by $\alpha$. Consider the Ptolemy relation for the quadrilateral with diagonals $\alpha$ and $\gamma$: it writes as
$1\cdot 1=a\cdot c +b\cdot d$ where $a,b,c,d\in \mathbb Z_+$, which is impossible.
This implies that $T$ and $T'$ cannot be both unitary, and thus the unitary triangulation is unique.

Part (c) can be  easily shown by induction on the number of triangles in one fan crossed by $\alpha$ (by applying Ptolemy relation to prove each step of the induction). \end{proof}

\begin{remark} Note that for surfaces that have more than three boundary components or handles, there will be at least one additional case to those in {\bf Claim 2}. At this point, we are not able to treat those cases.
\end{remark}

{\bf Acknowledgements:} The authors would like to thank the Isaac Newton Institute for Mathematical Sciences, Cambridge, for support and hospitality during the programme ``Cluster algebras and representation theory'' where work on this paper was undertaken. This work was supported by EPSRC grant no. EP/R014604/1.
\printbibliography

\end{document}